

\documentclass[12pt,reqno]{amsart}

\addtolength{\textwidth}{2cm} \addtolength{\hoffset}{-1cm}
\addtolength{\marginparwidth}{-1cm} \addtolength{\textheight}{2cm}
\addtolength{\voffset}{-1cm}


\usepackage{times}
\usepackage[T1]{fontenc}
\usepackage{mathrsfs}
\usepackage{latexsym}
\usepackage[dvips]{graphics}
\usepackage{epsfig}
\usepackage{amsmath,amsfonts,amsthm,amssymb,amscd}
\input amssym.def
\input amssym.tex
\usepackage{color}
\usepackage{hyperref}
\usepackage{pdfpages}
\usepackage{amsmath,amsfonts,amsthm,amssymb,amscd}
\usepackage{pstricks}
\usepackage[myheadings]{fullpage}
\usepackage{caption}
\usepackage{subcaption}

\usepackage{mathdots}

\newcommand{\burl}[1]{\textcolor{blue}{\url{#1}}}




\newcommand\be{\begin{equation}}
\newcommand\ee{\end{equation}}
\newcommand\bea{\begin{eqnarray}}
\newcommand\eea{\end{eqnarray}}
\newcommand\bi{\begin{itemize}}
\newcommand\ei{\end{itemize}}
\newcommand\ben{\begin{enumerate}}
\newcommand\een{\end{enumerate}}

\newtheorem{theorem}{Theorem}[section]

\newtheorem{lemma}[theorem]{Lemma}




\newcommand{\twocase}[5]{#1 \begin{cases} #2 & \text{#3}\\ #4
&\text{#5} \end{cases}   }











\newcommand{\rs}{{\rm RS}}
\newcommand{\ra}{{\rm RA}}
\newcommand{\rso}{{\rm RS_{\rm obs}}}
\newcommand{\rao}{{\rm RA_{\rm obs}}}

\numberwithin{equation}{section}

\begin{document}

\title{Relieving and Readjusting Pythagoras}

\author{Victor Luo}\email{\textcolor{blue}{\href{mailto:victor.d.luo@williams.edu}{victor.d.luo@williams.edu}}}
\address{Department of Mathematics and Statistics, Williams College, Williamstown, MA 01267}

\author{Steven J. Miller}\email{\textcolor{blue}{\href{mailto:sjm1@williams.edu}{sjm1@williams.edu}},  \textcolor{blue}{\href{Steven.Miller.MC.96@aya.yale.edu}{Steven.Miller.MC.96@aya.yale.edu}}}
\address{Department of Mathematics and Statistics, Williams College, Williamstown, MA 01267}



\subjclass[2000]{46N30 (primary), 62F03, 62P99  (secondary).}

\keywords{Pythagorean Won-Loss Formula, Weibull Distribution, Hypothesis Testing}

\date{\today}

\thanks{The second named author was partially supported by NSF grant DMS0970067. We thank Kevin Dayaratna, Bernhard Klingenberg and Jeffrey Miller for helpful comments and code over the years.}

\begin{abstract}
Bill James invented the Pythagorean expectation in the late 70's to predict a baseball team's winning percentage knowing just their runs scored and allowed. His original formula estimates a winning percentage of $\rs^2/(\rs^2+\ra^2)$, where $\rs$ stands for runs scored and $\ra$ for runs allowed; later versions found better agreement with data by replacing the exponent 2 with numbers near 1.83. Miller and his colleagues provided a theoretical justification by modeling runs scored and allowed by independent Weibull distributions. They showed that a single Weibull distribution did a very good job of describing runs scored and allowed, and led to a predicted won-loss percentage of $(\rso-1/2)^\gamma / ((\rso-1/2)^\gamma + (\rao-1/2)^\gamma)$, where $\rso$ and $\rao$ are the observed runs scored and allowed and $\gamma$ is the shape parameter of the Weibull (typically close to 1.8). We show a linear combination of Weibulls more accurately determines a team's run production and increases the prediction accuracy of a team's winning percentage by an average of about 25\% (thus while the currently used variants of the original predictor are accurate to about four games a season, the new combination is accurate to about three). The new formula is more involved computationally; however, it can be easily computed on a laptop in a matter of minutes from publicly available season data. It performs as well (or slightly better) than the related Pythagorean formulas in use, and has the additional advantage of having a theoretical justification for its parameter values (and not just an optimization of parameters to minimize prediction error).
\end{abstract}

\maketitle

\tableofcontents

\section{Introduction}

The Pythagorean Win/Loss Formula, also known as the Pythagorean formula or Pythagorean expectation, was invented by Bill James in the late 1970s to use a team's observed runs scored and allowed to predict their winning percentage. Originally given by \be {\rm Won-Loss\ Percentage} \ = \ \frac{\rs^2}{\rs^2+\ra^2}, \ee with $\rs$ the runs scored and $\ra$ the runs allowed, it earned its name from the similarity of the denominator to the sums of squares in the Pythagorean formula from geometry.\footnote{Though of course the more natural shape in baseball is the diamond, save for some interesting stadium features, such as the triangle in Fenway Park.} Later versions found better agreement by replacing the exponent 2 with numbers near 1.83, leading to an average error of about three to four games per season.

The formula is remarkably simple, requiring only the runs scored and allowed by a team in a season, and the calculation (even with the improved exponent) is easily done on any calculator or phone. It is one of the most commonly listed expanded statistics on websites. One reason for its prominence is its accuracy in predicting a team's future performance through a simple calculation and not through computationally intense simulations. Additionally, it allows sabermetricians and fans to assess a manager's impact on a team, and estimate the value of new signings by seeing how their presence would change the predictions.

Because of its widespread use and utility, it is very desirable to have improvements. In his senior thesis, the first named author, supervised by the second named author, explored various attempted improvements to the Pythagorean formula. These included replacing the observed runs scored and allowed each game with adjusted numbers, with the adjustments coming from a variety of sources (such as ballpark effects, game state\footnote{For example, if a team is up by a large amount late in a game, they frequently use weaker relief pitchers and rest some starters, while the trailing team makes similar moves; thus the offensive productions from this point onward may not be indicative of the team's true abilities and a case can be made to ignore such data.}, WHIP, ERA+, and WAR of the pitcher, ...).

As these led to only minor improvements\footnote{The only adjusted formula that was at least on par or very near the accuracy of the original Pythagorean W/L Formula was that of ballpark factor.} (see \cite{Luo} for a detailed analysis of these and other adjustments), we turned our attention to the successful theoretical model used by Miller and his colleagues \cite{DaMil1,DaMil2,Mil,MCGLP}, where it was assumed runs scored and allowed were independently drawn from Weibull distributions with the same shape parameter. Recall the three parameter Weibull density is given by \be \twocase{f(x;\alpha,\beta,\gamma) \ = \ }{\frac{\gamma}{\alpha}\ ((x-\beta)/\alpha)^{\gamma-1}\ e^{- ((x-\beta)/\alpha)^{\gamma}}}{if $x \ge \beta$}{0}{otherwise.} \ee The effect of $\alpha$ is to control the spread of the output, while $\beta$ translates the distribution. The most important parameter is $\gamma$, which controls the shape. See Figure \ref{fig:weibullplots} for some plots.

\begin{center}
\begin{figure}
\includegraphics[scale=0.65]{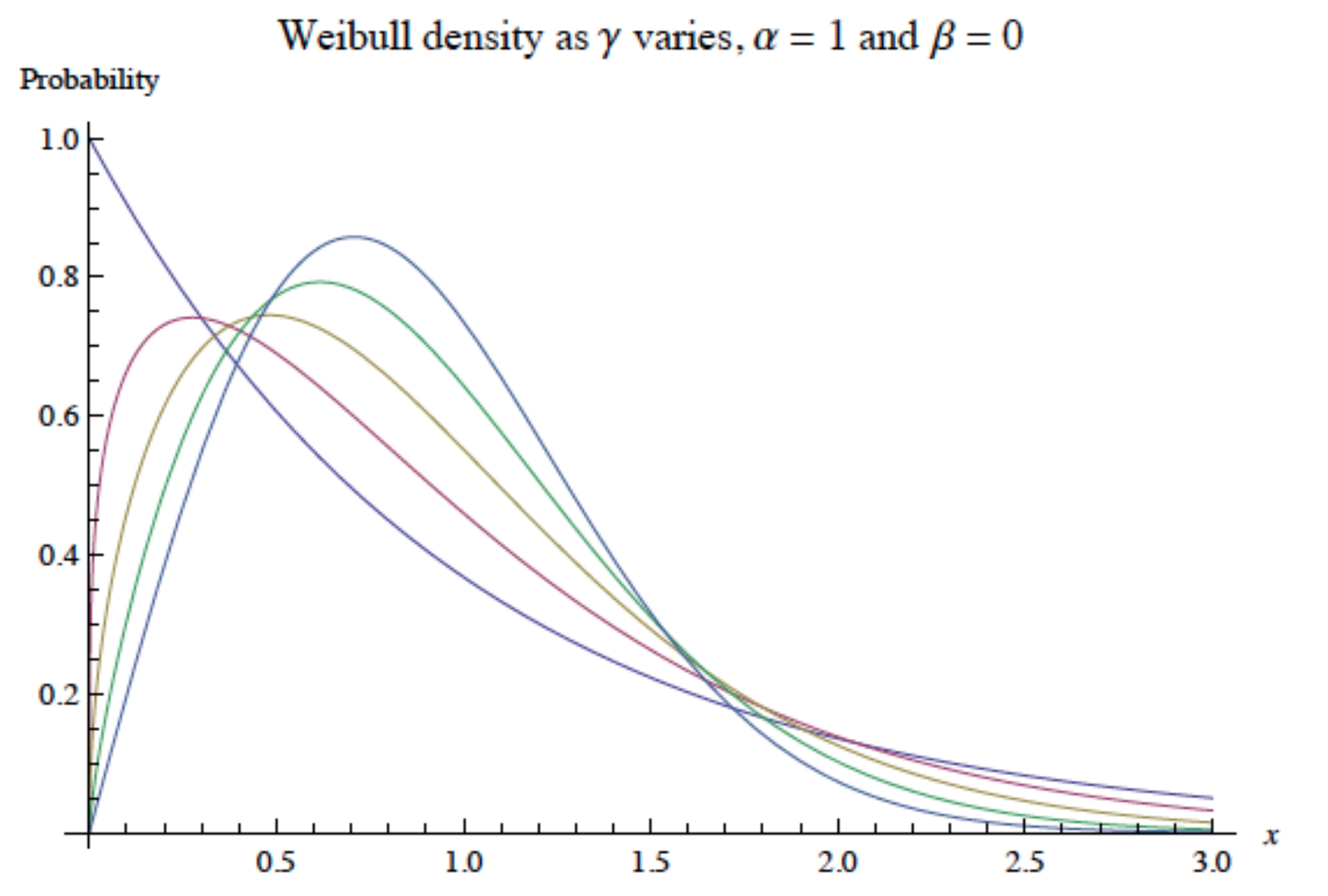}
\caption{\label{fig:weibullplots} The varying distributions of the Weibull family with $\alpha=1$ and $\beta=0$.}
\end{figure}
\end{center}

Their success is due to the fact that the three parameter Weibull is a very flexible family of distributions, capable of fitting many one hump distributions, including to a statistically significant degree the observed runs scored and allowed data. Miller chose to use Weibulls for two reasons. First, they lead to double integrals for the probabilities that can be evaluated in closed form. This is extremely important if we desire a simple expression such as the one posited by James (see \cite{HJM} for alternative simple formulas). Second, in addition to being flexible, special values of the Weibulls correspond to well-known distributions ($\gamma = 1$ is an exponential, while $\gamma = 2$ is the Rayleigh distribution).

The goal of this paper is to show that one can significantly improve the predictive power if instead of modeling runs scored and allowed as being drawn from independent Weibulls, we instead model them as being drawn from linear combinations of independent Weibulls. The advantage of this approach is that we are still able to obtain tractable double integrals which can be done in closed form. There is a cost, however, as now more analysis is needed to find the parameters and the correct linear combinations. While this results in a more complicated formula than the standard variant of James' formula, it is well worth the cost as on average it is better by one game per season (thus a typical error is 3 games per team per year, as opposed to 4 which is the typical result in the current formula). Comparing it to \url{baseball-reference.com}'s calculated Expected WL from 1979 to 2013, we find that the linear combination of Weibulls is approximately .06 of a game better, a difference is not statistically significant. However, there are noticeable trends that appear in certain eras.

\section{Theoretical Calculations} \label{sec:theoretical}

\subsection{Preliminaries}

It is important to note that we assume that runs scored and allowed are taken from continuous, not discrete, distributions. This allows us to deal with continuous integrals rather than discrete sums, which most of the time leads to easier calculations. While a discrete distribution would probably more effectively map runs in baseball, the assumption of drawing runs from a continuous distribution allows for more manageable calculations, and is a very sensible estimate of those runs observed. It also will lead to closed form expressions, which are much easier to work with and allow us to avoid having to resort to simulations.

The Weibulls lead to significantly easier calculations because if we have a random variable $X$ chosen from a Weibull distribution with parameters $\alpha$, $\beta$, and $\gamma$, then $X^{1/\gamma}$ is exponentially distributed with parameter $\alpha^\gamma$; thus, a change of variables yields a simpler integral of exponentials, which can be done in closed form (see Appendix 9.1 in \cite{MCGLP} for details).

In all arguments below we always take $\beta=-1/2$, though we often write $\beta$ to keep the discussion more general for applications to other sports. The reason we do this is that we use procedures such as the Method of Least Squares to find the best fit parameters, and this requires binning the observed runs scored and allowed data. As baseball scores are discrete, there are issues if these values occur at the boundary of bins; it is much better if they are at the center. By taking $\beta=-1/2$ we break the data into bins
\begin{equation} \label{bins}
\left[-\frac{1}{2},\ \frac{1}{2}\right),\ \ \ \left[\frac{1}{2},\ \frac{3}{2}\right),\ \ \ \left[\frac{3}{2},\ \frac{5}{2}\right),\ \ \ \cdots.
\end{equation}

Our final assumption is that runs scored and runs allowed are independent. This obviously cannot be true, as a baseball game never ends in a tie. For example, if the Orioles and the Red Sox are playing and the O's score 5 runs, then the Sox cannot score 5. Statistical analyses support this hypothesis; see the independence tests with structural zeros in \cite{Mil} or Appendix 9.2 in \cite{MCGLP} for details.

We end this subsection with the mean and the variance of the Weibull. The calculation follows from standard integration (see the expanded version of \cite{Mil} or Appendix 9.1 in \cite{MCGLP} for a proof of the formula for the mean; the derivation of the variance follows in a similar fashion).

\begin{lemma} \label{weibullmv} Consider a Weibull with parameters $\alpha, \beta, \gamma$. The mean, $\mu_{\alpha,\beta,\gamma}$, equals
\be
\mu_{\alpha,\beta,\gamma}\ = \ \alpha \Gamma (1+\gamma^{-1})+\beta \label{weibullmean}
\ee
while the variance, $\sigma^2_{\alpha,\beta,\gamma}$, is
\be
\sigma^2_{\alpha,\beta,\gamma}\ = \ \alpha^2 \Gamma(1+2\gamma^{-1})-\alpha^2\Gamma(1+\gamma^{-1})^2,
\ee
where $\Gamma(x)$ is the Gamma function, defined by $\Gamma(x)=\int_0^\infty e^{-u}u^{x-1}du$.
\end{lemma}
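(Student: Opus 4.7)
The plan is to reduce the mean and variance computations to the standard Gamma integral $\int_0^\infty u^s e^{-u}\,du = \Gamma(s+1)$ via a translation and a power change of variables. The translation removes the $\beta$ parameter, and the change of variables turns the Weibull density into an exponential, whose moments are exactly Gamma values.

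First I would set $Y = X-\beta$, so $Y$ has density $f(y;\alpha,0,\gamma) = (\gamma/\alpha)(y/\alpha)^{\gamma-1} e^{-(y/\alpha)^\gamma}$ on $[0,\infty)$. Since translation by $\beta$ shifts the mean by $\beta$ and leaves the variance unchanged, it suffices to compute $E[Y]$ and $E[Y^2]$ and then add $\beta$ to the mean.

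Next I would apply the substitution $u = (y/\alpha)^\gamma$, equivalently $y = \alpha u^{1/\gamma}$ with $dy = (\alpha/\gamma) u^{1/\gamma-1}\,du$. A direct check shows that $U := (Y/\alpha)^\gamma$ has density $e^{-u}$ on $[0,\infty)$; this is the reduction to the exponential distribution already mentioned in the preliminaries. Writing $Y = \alpha U^{1/\gamma}$ and integrating against $e^{-u}\,du$ yields
\[
E[Y] \ = \ \alpha \int_0^\infty u^{1/\gamma} e^{-u}\,du \ = \ \alpha\,\Gamma(1+\gamma^{-1}),
\]
\[
E[Y^2] \ = \ \alpha^2 \int_0^\infty u^{2/\gamma} e^{-u}\,du \ = \ \alpha^2\,\Gamma(1+2\gamma^{-1}).
\]
Combining with $\mu_{\alpha,\beta,\gamma} = E[Y] + \beta$ and $\sigma^2_{\alpha,\beta,\gamma} = E[Y^2] - E[Y]^2$ produces the two claimed formulas.

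There is no real obstacle here; the entire argument is a change-of-variables exercise. The only point requiring care is bookkeeping: the factor $\gamma/\alpha$ in the density, the prefactor $(y/\alpha)^{\gamma-1}$, and the Jacobian $(\alpha/\gamma) u^{1/\gamma-1}$ must combine so that the residual power of $u$ is exactly zero, leaving the clean integrand $e^{-u}$. This is precisely the computational convenience of the Weibull family noted earlier, and matches the authors' remark that the derivation "follows from standard integration."
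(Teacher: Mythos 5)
Your proof is correct, and it follows exactly the route the paper itself indicates: the paper omits the computation, citing ``standard integration'' in \cite{Mil} and \cite{MCGLP}, and its preliminaries explicitly flag the same reduction you use, namely that the power change of variables turns the (translated, rescaled) Weibull into a unit exponential so that the moments become Gamma integrals. The bookkeeping in your substitution is right, and the observation that the translation by $\beta$ shifts the mean but not the variance completes the argument.
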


\subsection{Linear Combination of Weibulls}

We now state and prove our main result for a linear combination of two Weibulls, and leave the straightforward generalization to combinations of more Weibulls to the reader. The reason such an expansion is advantageous and natural is that, following \cite{Mil}, we can integrate pairs of Weibulls in the regions needed and obtain simple closed form expressions. The theorem below also holds if $\gamma < 0$; however, in that situation the more your runs scored exceeds your runs allowed, the worse your predicted record due to the different shape of the Weibull (in all applications of Weibulls in survival analysis, the shape parameter $\gamma$ must be positive).

\begin{theorem} \label{combweib}
Let the runs scored and allowed per game be two independent random variables drawn from linear combinations of independent Weibull distributions with the same $\beta$'s and $\gamma$'s. Specifically, if $W(t;\alpha,\beta,\gamma)$ represents a Weibull distribution with parameters $(\alpha,\beta,\gamma)$,  and we choose non-negative weights\footnote{If we had more terms in the linear combination, we would simply choose non-negative weights summing to 1.} $0\le c_i, c_j' \le 1$  (so $c_1 + c_2 = 1$ and $c_1'+c_2' = 1$), then the density of runs scored, $X$ is \be f(x;\alpha_{{\rm RS}_1},\alpha_{{\rm RS}_2}, \beta, \gamma,c_1, c_2) \ = \ c_1 W(x;\alpha_{{\rm RS}_1},\beta,\gamma)+c_2 W(\alpha_{{\rm RS}_2},\beta,\gamma)\ee and runs allowed, $Y$, is \be f(y;\alpha_{{\rm RA}_1},  \alpha_{{\rm RA}_2}, \beta, \gamma,c_1', c_2') \ = \ c_1' W(y;\alpha_{{\rm RA}_1},\beta,\gamma)+c_2'W(\alpha_{{\rm RA}_2},\beta,\gamma).\ee In addition, we choose $\alpha_{{\rm RS}_1}$ and $\alpha_{{\rm RS}_2}$ so that the mean of $X$ is $\rso$ and choose $\alpha_{{\rm RA}_1}$ and $\alpha_{{\rm RA}_2}$ such that the mean of $Y$ is $\rao$. For $\gamma>0$, we have
\bea & & {\rm Won-Loss\ Percentage}(\alpha_{{\rm RS}_1}, \alpha_{{\rm RS}_2}, \alpha_{{\rm RA}_1}, \alpha_{{\rm RA}_2},\beta,\gamma,c_1, c_2, c_1',c_2') \nonumber\\ & & \ \ \ \ \ \ \ \ \ \ \ \ \ \ \  =\  c_1 c_1' \frac{\alpha_{{\rm RS}_1}^\gamma}{\alpha_{{\rm RS}_1}^\gamma+\alpha_{{\rm RA}_1}^\gamma} +c_1 c_2'\frac{\alpha_{{\rm RS}_1}^\gamma}{\alpha_{{\rm RS}_1}^\gamma+\alpha_{{\rm RA}_2}^\gamma} \nonumber\\ & & \ \ \ \ \ \ \ \ \ \ \ \ \ \ \  \ \ \ \ +c_2 c_1' \frac{\alpha_{{\rm RS}_2}^\gamma}{\alpha_{{\rm RS}_2}^\gamma +\alpha_{{\rm RA}_1}^\gamma} +c_2 c_2'\frac{\alpha_{{\rm RS}_2}^\gamma}{\alpha_{{\rm RS}_2}^\gamma+\alpha_{{\rm RA}_2}^\gamma} \nonumber\\ & & \ \ \ \ \ \ \ \ \ \ \ \ \ \ \  = \ \sum_{i=1}^2 \sum_{j=1}^2 c_i c_j' \frac{\alpha_{{\rm RS}_i}^\gamma}{\alpha_{{\rm RS}_i}^\gamma+\alpha_{{\rm RA}_j}^\gamma}. \eea
\end{theorem}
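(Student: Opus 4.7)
The plan is to interpret the won-loss percentage as $P(X > Y)$, where $X$ is runs scored and $Y$ is runs allowed, and then exploit bilinearity to reduce the computation to the single-Weibull case already handled in \cite{Mil,MCGLP}.

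First, by the independence of $X$ and $Y$ together with the continuity of their distributions, the probability that $X>Y$ is the double integral
$$P(X > Y) \ = \ \int_\beta^\infty\!\!\int_\beta^x f_X(x)\, f_Y(y)\, dy\, dx.$$
Second, I would substitute the mixture densities for $f_X$ and $f_Y$, expand the product, and pull the finite double sum outside the integral to obtain
$$P(X > Y) \ = \ \sum_{i=1}^2 \sum_{j=1}^2 c_i c_j' \int_\beta^\infty\!\!\int_\beta^x W(x;\alpha_{{\rm RS}_i},\beta,\gamma)\, W(y;\alpha_{{\rm RA}_j},\beta,\gamma)\, dy\, dx.$$
This is where the linear-combination structure pays off: each of the four inner integrals is precisely the probability that a single Weibull with scale $\alpha_{{\rm RS}_i}$ exceeds an independent Weibull with scale $\alpha_{{\rm RA}_j}$, both with shared parameters $\beta$ and $\gamma$.

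Third, I would invoke (or briefly reprove) the single-Weibull identity. The key move is the monotone change of variables $u=((x-\beta)/\alpha_{{\rm RS}_i})^\gamma$ and $v=((y-\beta)/\alpha_{{\rm RA}_j})^\gamma$, which is valid and increasing on $[\beta,\infty)$ precisely because $\gamma>0$. This converts each pair into independent standard exponentials rescaled by $\alpha_{{\rm RS}_i}^\gamma$ and $\alpha_{{\rm RA}_j}^\gamma$ respectively; the classical comparison of two independent exponentials yields the closed form $\alpha_{{\rm RS}_i}^\gamma / (\alpha_{{\rm RS}_i}^\gamma + \alpha_{{\rm RA}_j}^\gamma)$. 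Substituting this value into each of the four summands produces exactly the displayed formula.

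There is essentially no genuine obstacle here: once the single-Weibull identity is granted, bilinearity of the integral does all the work, and the weights $c_i$, $c_j'$ pass through cleanly. The mean constraints involving $\rso$ and $\rao$ (via Lemma \ref{weibullmv}) are not used in the probability computation itself; they enter only afterwards, when the formula is fit to data and the $\alpha$'s are solved for in terms of observed averages. The only mild subtleties to flag are the hypothesis $\gamma>0$, which is what makes $t\mapsto (t-\beta)^\gamma$ monotone on $[\beta,\infty)$ so that $X>Y \iff (X-\beta)^\gamma>(Y-\beta)^\gamma$, and the use of continuity to disregard ties, an issue already addressed in the paper via the independence-with-structural-zeros discussion. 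The generalization to linear combinations of more than two Weibulls is then immediate by the same bilinearity argument.
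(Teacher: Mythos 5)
Your proposal is correct and follows essentially the same route as the paper: expand the product of mixture densities, pull the double sum of weights outside by bilinearity, and reduce to the four single-Weibull comparisons, each evaluating to $\alpha_{{\rm RS}_i}^\gamma/(\alpha_{{\rm RS}_i}^\gamma+\alpha_{{\rm RA}_j}^\gamma)$. The only cosmetic difference is that you evaluate each pair by substituting to exponentials, whereas the paper integrates directly and recognizes a rescaled Weibull density via $1/\alpha_{ij}^\gamma = 1/\alpha_{{\rm RS}_i}^\gamma + 1/\alpha_{{\rm RA}_j}^\gamma$ --- the same computation in different clothing, and your observation that the mean constraints play no role in the probability calculation matches the paper.
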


\begin{proof}
As the means of $X$ (runs scored) and $Y$ (runs allowed) are $\rso$ and $\rao$, respectively, and the random variables are drawn from linear combinations of independent Weibulls, by Lemma \ref{weibullmv}
\begin{align}
\rso &\ =\ c_1(\alpha_{{\rm RS}_1}\Gamma(1+\gamma^{-1})+\beta)+(1-c_1)(\alpha_{{\rm RS}_2}\Gamma(1+\gamma^{-1})+\beta) \nonumber \\
\rao &\ =\ c_1'(\alpha_{{\rm RA}_1}\Gamma(1+\gamma^{-1})+\beta)+(1-c_1')(\alpha_{{\rm RA}_2}\Gamma(1+\gamma^{-1})+\beta).
\end{align}

We now calculate the probability that $X$ exceeds $Y$. We constantly use the fact that the integral of a probability density is 1. We need the two $\beta$ and the two $\gamma$'s to be equal in order to obtain closed form expressions.\footnote{If the $\beta$'s are differen then in the integration below we might have issues with the bounds of integration, while if the $\gamma$'s are unequal we get incomplete Gamma functions, though for certain rational ratios of the $\gamma$'s these can be done in closed form.} We find
\begin{align}
& {\rm Prob}(X>Y)  = \int_{x=\beta}^\infty\int_{y=\beta}^x f(x; \alpha_{{\rm RS}_1},\alpha_{{\rm RS}_2},\beta,\gamma, c_1, c_2)f(y; \alpha_{{\rm RA}_1},\alpha_{{\rm RA}_2},\beta,\gamma, c_1',c_2')dydx  \nonumber \\ 
 &=\  \sum_{i=1}^2\sum_{j=1}^2 \int_{x=0}^\infty \int_{y=0}^x c_ic_j' \frac{\gamma}{\alpha_{{\rm RS}_i}}\left(\frac{x}{\alpha_{{\rm RS}_i}}\right)^{\gamma-1}e^{-(\frac{x}{\alpha_{{\rm RS}_i}})^\gamma}\frac{\gamma}{\alpha_{{\rm RA}_j}}\left(\frac{x}{\alpha_{{\rm RA}_j}}\right)^{\gamma-1}e^{-(\frac{x}{\alpha_{{\rm RA}_j}})^\gamma}dydx \nonumber \\  
& = \sum_{i=1}^2\sum_{j=1}^2 c_ic_j' \int_{x=0}^\infty \frac{\gamma}{\alpha_{{\rm RS}_i}} \left( \frac{x}{\alpha_{{\rm RS}_i}} \right)^{\gamma-1}e^{-(\frac{x}{\alpha_{{\rm RS}_i}})^\gamma}\left[ \int_{y=0}^x \frac{\gamma}{\alpha_{{\rm RA}_j}} \left( \frac{y}{\alpha_{{\rm RA}_j}} \right)^{\gamma-1} e^{-(\frac{y}{\alpha_{{\rm RA}_j}})^\gamma} dy \right] dx \nonumber \\
 &=\  \sum_{i=1}^2\sum_{j=1}^2 c_ic_j'  \int_{x=0}^\infty \frac{\gamma}{\alpha_{{\rm RS}_i}} \left( \frac{x}{\alpha_{{\rm RS}_i}} \right)^{\gamma-1}e^{-(\frac{x}{\alpha_{{\rm RS}_i}})^\gamma} \ast \left[ 1- e^{-(\frac{x}{\alpha_{{\rm RA}_j}})^\gamma} \right] dx \nonumber \\ 
 &=\  \sum_{i=1}^2\sum_{j=1}^2 c_ic_j'  \left[ 1- \int_{x=0}^\infty \frac{\gamma}{\alpha_{{\rm RS}_i}} \left( \frac{x}{\alpha_{{\rm RS}_i}}\right)^{\gamma-1} e^{-(\frac{x}{\alpha_{{\rm RS}_i}})^\gamma-(\frac{x}{\alpha_{{\rm RA}_j}})^\gamma} dx\right]. \label{eqqq}
\end{align}

We set
\begin{align}
\frac{1}{\alpha_{ij}^\gamma}\ = \ \frac{1}{\alpha_{{\rm RS}_i}^\gamma}+\frac{1}{\alpha_{{\rm RA}_j}^\gamma}\ = \ \frac{\alpha_{{\rm RS}_i}^\gamma+\alpha_{{\rm RA}_j}^\gamma}{\alpha_{{\rm RS}_i}^\gamma \alpha_{{\rm RA}_j}^\gamma} \nonumber \\ \nonumber
\end{align}
for $1\leq i,j\leq 2$, so that \eqref{eqqq} becomes
\begin{align}
& \sum_{i=1}^2\sum_{j=1}^2 c_ic_j'  \left[ 1- \int_{x=0}^\infty \frac{\gamma}{\alpha_{{\rm RS}_i}}\left( \frac{x}{\alpha_{{\rm RS}_i}}\right)^{\gamma-1}e^{-(\frac{x}{\alpha_{ij}})^\gamma} dx \right]  \nonumber \\
&=\ \sum_{i=1}^2\sum_{j=1}^2 c_ic_j' \left[ 1- \frac{\alpha_{ij}^\gamma}{\alpha_{{\rm RS}_i}^\gamma} \int_{x=0}^\infty \frac{\gamma}{\alpha_{ij}} \left( \frac{x}{\alpha_{ij}} \right)^{\gamma-1}e^{-(\frac{x}{\alpha_{ij}})^\gamma} dx \right] \nonumber \\
&=\  \sum_{i=1}^2 \sum_{j=1}^2 c_ic_j' \left[1-\frac{\alpha_{ij}^\gamma}{\alpha_{{\rm RS}_i}^\gamma} \right] \nonumber \\ 
&=\  \sum_{i=1}^2\sum_{j=1}^2 c_i c_j'\left[1-\frac{1}{\alpha_{{\rm RS}_i}^\gamma}\ast \frac{\alpha_{{\rm RS}_i}^\gamma\alpha_{{\rm RA}_j}^\gamma}{\alpha_{{\rm RS}_i}^\gamma+\alpha_{{\rm RA}_j}^\gamma}\right] \nonumber \\  
&=\  \sum_{i=1}^2\sum_{j=1}^2 c_i c_j'\left[\frac{\alpha_{{\rm RS}_i}^\gamma}{\alpha_{{\rm RS}_i}^\gamma+\alpha_{{\rm RA}_j}^\gamma}\right] \nonumber \\  
&=\  c_1c_1' \frac{\alpha_{{\rm RS}_1}^\gamma}{\alpha_{{\rm RS}_1}^\gamma+\alpha_{{\rm RA}_1}^\gamma} +c_1c_2' \frac{\alpha_{{\rm RS}_1}^\gamma}{\alpha_{{\rm RS}_1}^\gamma+\alpha_{{\rm RA}_2}^\gamma} \nonumber \\
&\indent +c_2c_1' \frac{\alpha_{{\rm RS}_2}^\gamma}{\alpha_{{\rm RS}_2}^\gamma +\alpha_{{\rm RA}_1}^\gamma}
+c_2c_2' \frac{\alpha_{{\rm RS}_2}^\gamma}{\alpha_{{\rm RS}_2}^\gamma+\alpha_{{\rm RA}_2}^\gamma},
\end{align} completing the proof of Theorem \ref{combweib}.\end{proof}


\section{Curve Fitting}

\subsection{Theory}

We now turn to finding the values of the parameters leading to the best fit. We require $\beta = -1/2$ (for binning purposes), but otherwise the parameters ($\alpha_{{\rm RS}_1}, \alpha_{{\rm RS}_2}$, $\alpha_{{\rm RA}_1}$, $\alpha_{{\rm RA}_2}$, $\gamma$, $c_1$, $c_2$, $c_1'$, $c_2'$) are free.\footnote{Subject to, of course, $0 \le c_i, c_j' \le 1$ and $c_1+c_1=c_1'+c_2'=1$.} Our first approach was to use the Method of Moments, where we compute the number of moments equal to the number of parameters. Unfortunately the resulting equations were too involved to permit simple solutions for them in terms of the observed data; for completeness they are given in Appendix \ref{sec:AppMoment} (or see \cite{Luo}). We thus turned to the Method of Least Squares (though one could also do an analysis through the Method of Maximum Likelihood).

We looked at the 30 teams of the entire league from the 2004 to 2012 season. We display results from the 2011, but the results from any other season are similar readily available (see \cite{Luo}). We implemented the Method of Least Squares using the bins in \eqref{bins},  which involved minimizing the sum of squares of the error of the runs scored data plus the sum of squares of the error of the runs allowed data. There were seven free parameters: $\alpha_{{\rm RS}_1}$, $\alpha_{{\rm RS}_2}$, $\alpha_{{\rm RA}_1}$, $\alpha_{{\rm RA}_2}$, $\gamma$, $c_1$, and $c_1'$. Letting Bin$(k)$ be the $k$\textsuperscript{th} bin of $\eqref{bins}$, $\rso(k)$ and $\rao(k)$ represent the observed number of games with number of runs scored and allowed in Bin$(k)$, and $A(\alpha_1,\alpha_2,\beta,\gamma,c_1,k)$ denote the area under the linear combination of two Weibulls with parameters $(\alpha_1,\alpha_2,\beta,\gamma,c_1)$ in Bin$(k)$, then for each team we found the values of $(\alpha_{{\rm RS}_1},\alpha_{{\rm RS}_2},\alpha_{{\rm RA}_1},\alpha_{{\rm RA}_2}, \gamma, c_1, c_1')$ that minimized
\begin{align}
& \sum_{k=1}^{\textnormal{Num. Bins}} (\rso(k)-\#\textnormal{Games}\ast A(\alpha_{{\rm RS}_1},\alpha_{{\rm RS}_2},-.5,\gamma,c_1,k))^2 \nonumber \\ &\indent +\sum_{k=1}^{\textnormal{Num. Bins}} (\rao(k)-\#\textnormal{Games}\ast A(\alpha_{{\rm RA}_1},\alpha_{{\rm RA}_2},-.5,\gamma,c_1',k))^2 .
\end{align}



\subsection{Results}

For each team, we found the best fit linear combination of Weibulls. In Figure \ref{leastsquarestable}, we compared the predicted wins, losses, and won-loss percentage with the observed ones.

\begin{figure}[h!]
\includegraphics[scale=0.5]{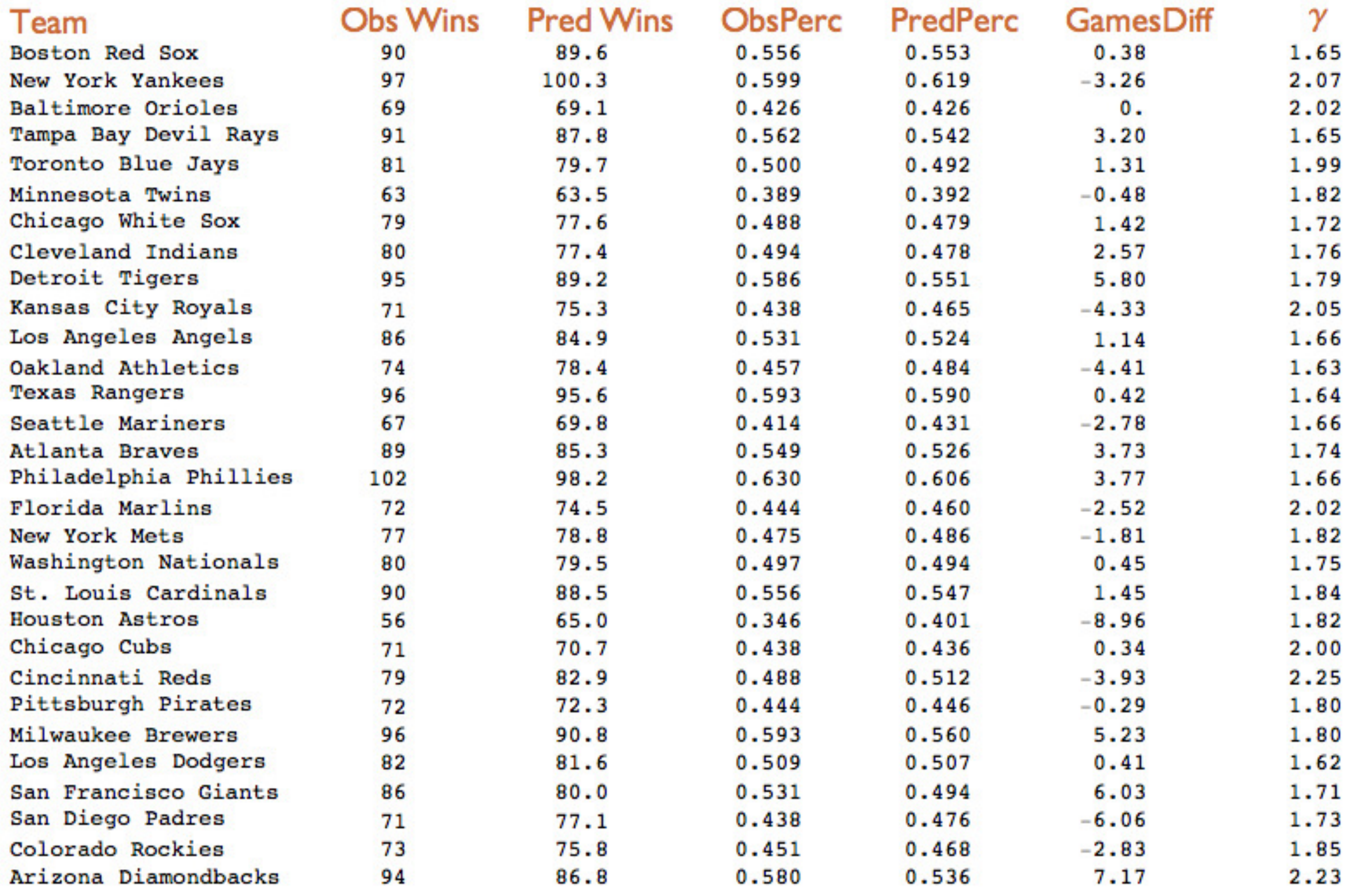}
\caption{Results for the 2011 season using Method of Least Squares. \label{leastsquarestable}}
\end{figure}

The code used is available in \cite{Luo}. Using the Method of Least Squares, the mean $\gamma$ over all 30 teams is 1.83 with a standard deviation of 0.18 (the median is 1.79). We can see that the exponent 1.83, considered as the best exponent, is clearly within the region of one standard deviation from the mean $\gamma$. Considering the absolute value of the difference between observed and predicted wins,  we have a mean of 2.89 with a standard deviation of 2.34 (median is 2.68). Without considering the absolute value, the mean is 0.104 with a standard deviation of 3.75 (and a median of 0.39). We only concern ourselves with the absolute value of the difference, as this really tells how accurate our predicted values are.

These values are significant improvements on those obtained when using a single Weibull distribution to predict runs (which essentially reproduces James' original formula, though  with a slightly different exponent), which produces a mean number of games off of 4.43 with standard deviation 3.23 (and median 3.54) in the absolute value case. We display the results over seasons from 2004 to 2012 in Figure \ref{sidebyside}. It is apparent that the linear combination of Weibulls better estimates teams' win/loss percentage; in fact, it is over one game better at estimating than the single Weibull! The mean number of games off for a single Weibull from 2004 to 2012 was 4.22 (with a standard deviation of 3.03), while that of the linear combination of Weibulls was 3.11 (with a standard deviation of 2.33). In addition, there is less standard deviation in the estimates. Thus, it appears that the linear combination of Weibulls provides a much tighter, better estimate than the single Weibull does.

\begin{figure}[h!]
\includegraphics[scale=0.4]{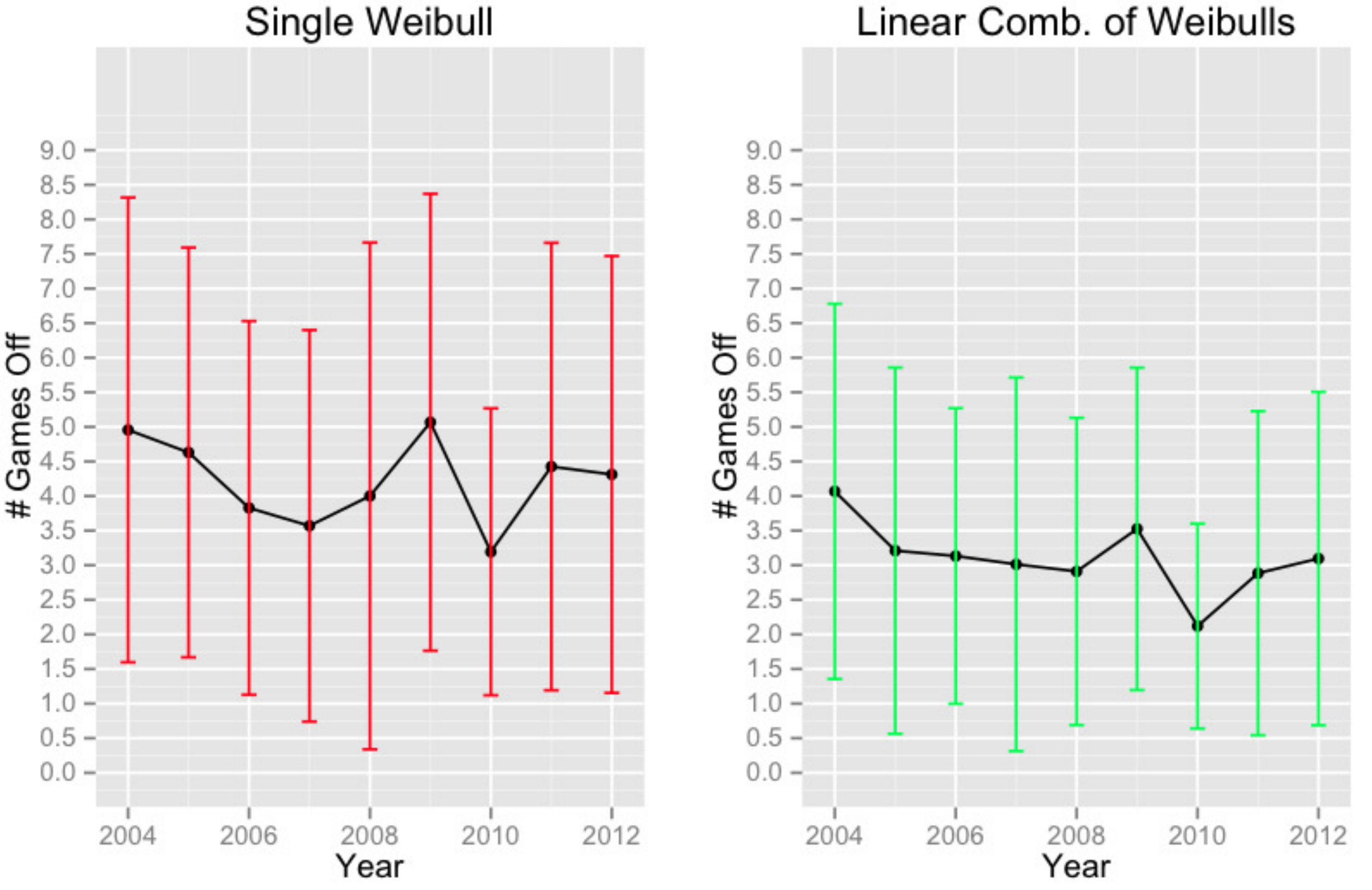}
\caption{Mean number of games off (with standard deviation) for single Weibull and linear combination of Weibulls from 2004-2012. \label{sidebyside}}
\end{figure}

To further demonstrate how accurate the quality of the fit is, we compare the best fit linear combination of Weibulls of runs scored and allowed with those observed of the 2011 Seattle Mariners in Figure \ref{mariners}; we can see that the fit is visually very good. Of course, the fit \emph{cannot} be worse, as we can always set $c_1 = 0=c_1'$; however, we can see the linear combination of Weibulls does a better job tracking the shape of the runs scored.

\begin{figure}
\centering
\begin{subfigure}{.53\textwidth}
  \centering
  \includegraphics[width=.8\linewidth]{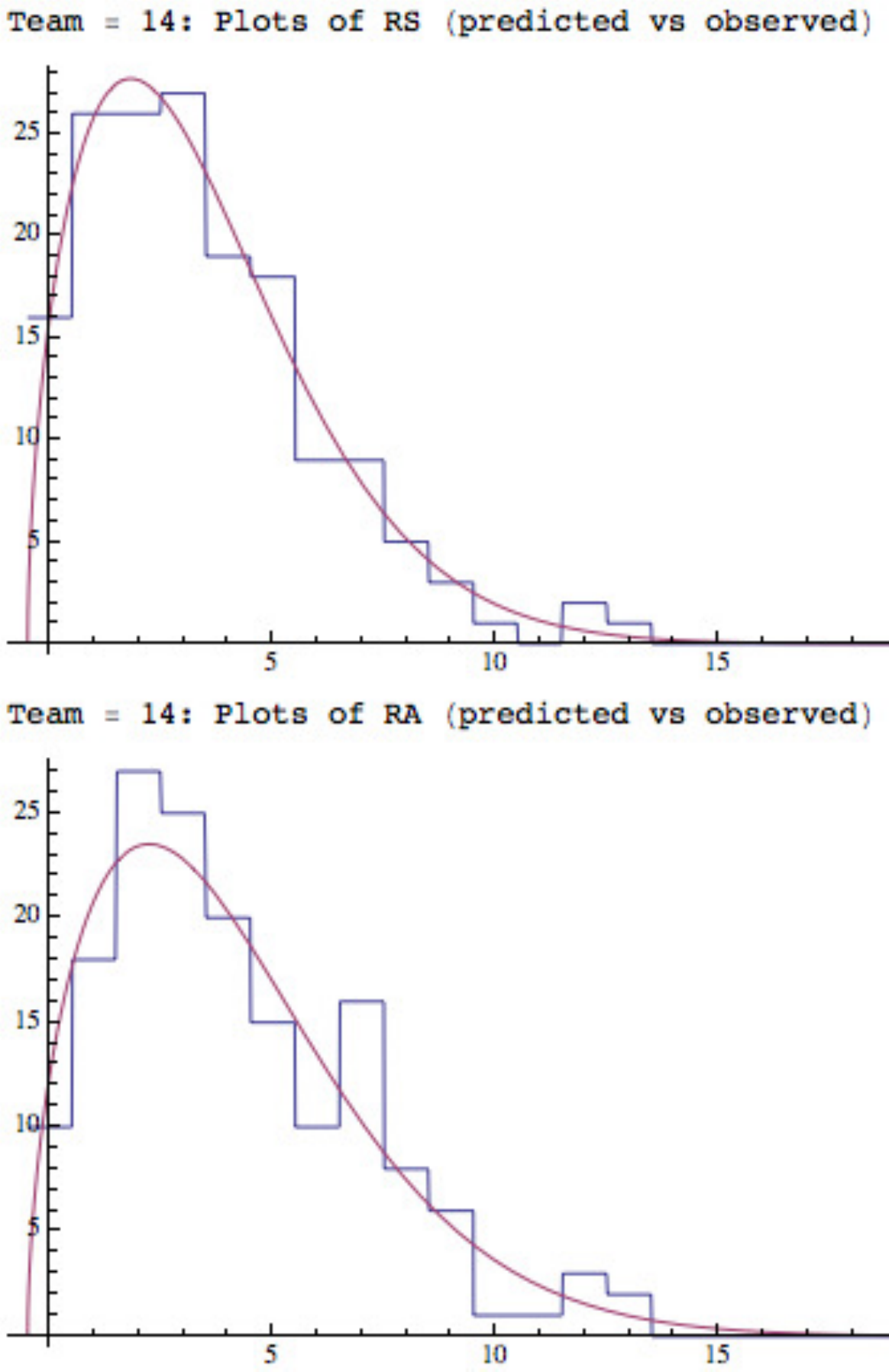}
  \caption{Single Weibull mapping runs scored and \\ allowed.}
  \label{fig:sub1}
\end{subfigure}%
\begin{subfigure}{.53\textwidth}
  \centering
  \includegraphics[width=.8\linewidth]{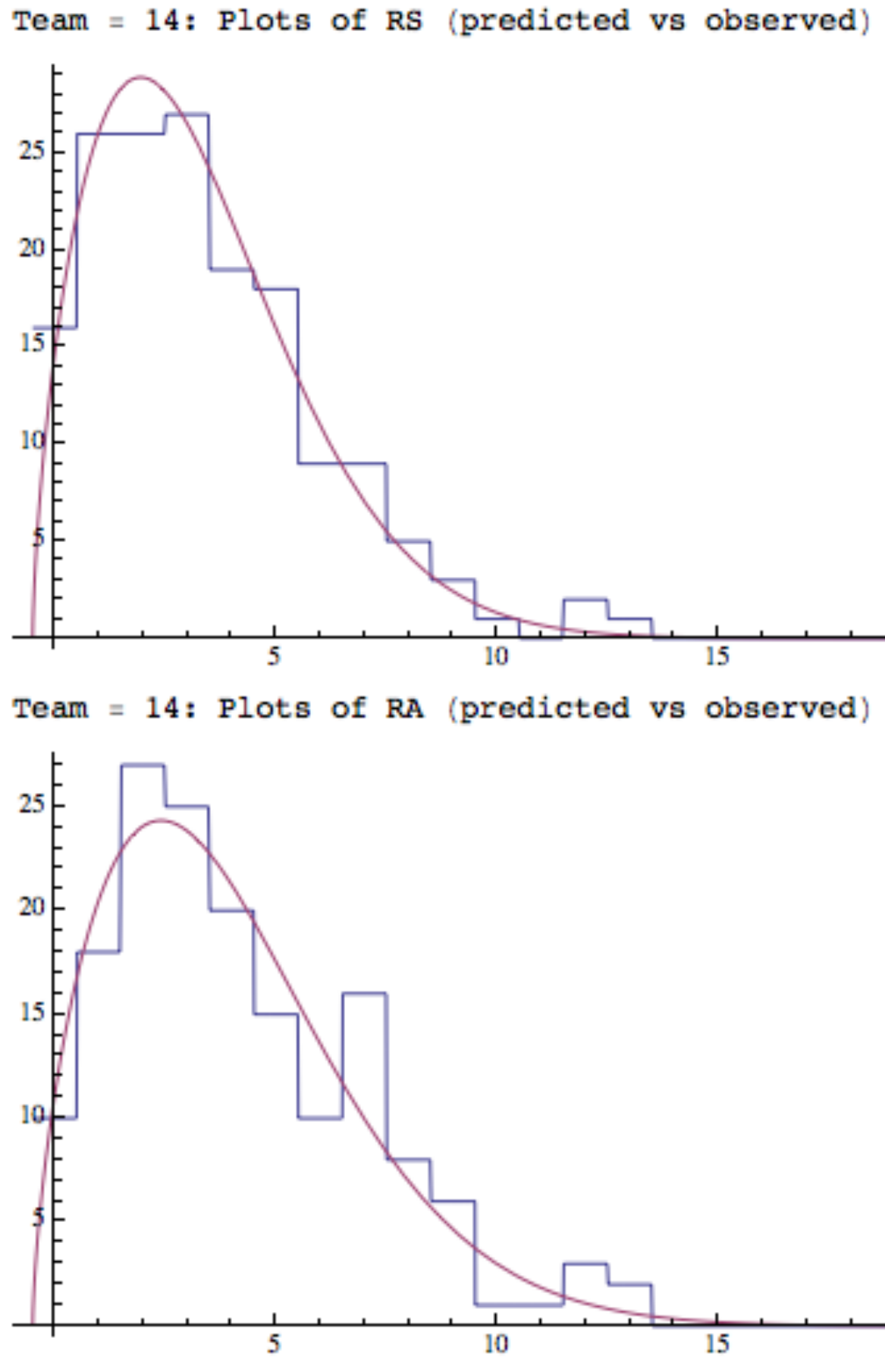}
  \caption{Linear Combination of Weibulls mapping runs \\ scored and allowed.}
  \label{fig:sub2}
\end{subfigure}
\caption{Comparison of best fit linear combination of Weibulls versus single Weibull for runs scored (top) and allowed (bottom) for the 2011 Seattle Mariners against the observed distribution of scores.}
\label{mariners}
\end{figure}

We then performed an independent two-sample t-test with unequal variances in $R$ using the t.test command to see if the difference between the games off determined by the single Weibull and those by linear combinations of Weibulls is statistically significant in Figure \ref{test}. With a $p$-value less than 0.01 and a 95\% confidence interval that does not contain 0, we can see that the difference is in fact statistically significant.

\begin{figure}[h!]
\includegraphics[scale=0.7]{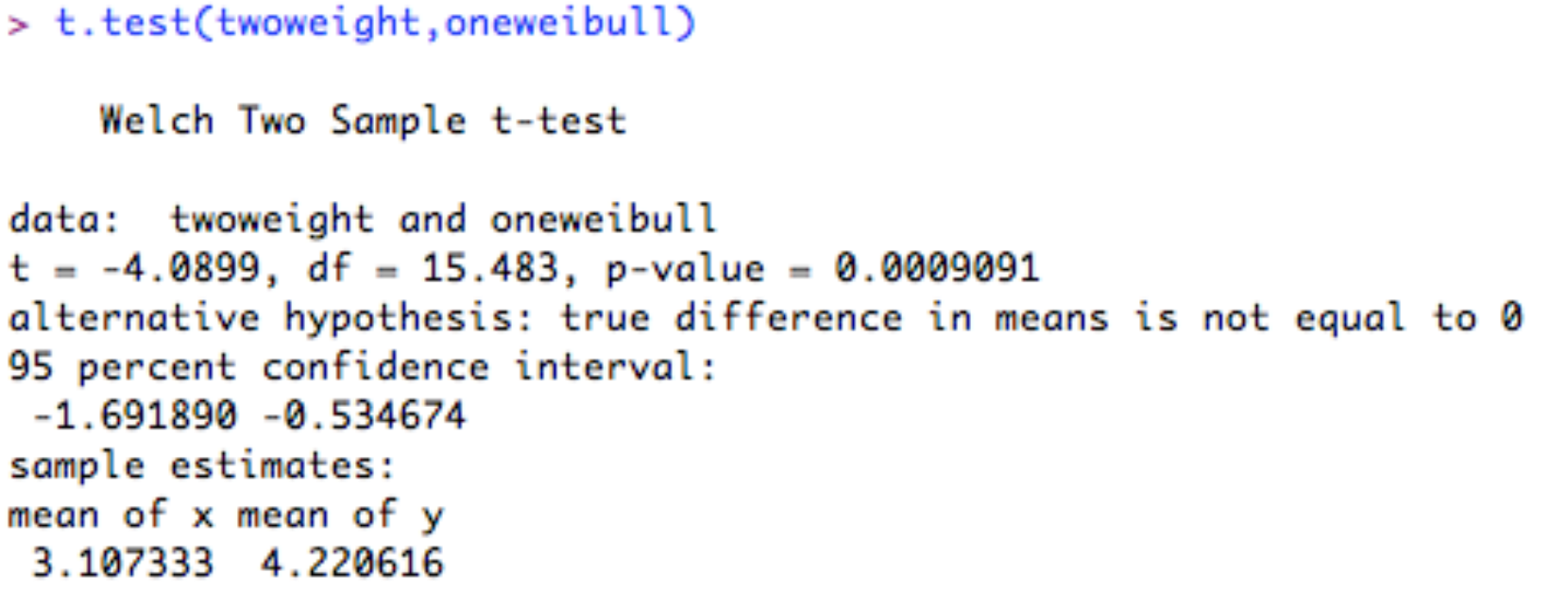}
\caption{t-test to determine whether the difference between the games off determined by the single Weibull and those by linear combinations of Weibulls is statistically significant.  \label{test}}
\end{figure}

In addition, we compared the mean number of games off of \url{baseball-reference.com}'s Pythagorean Win-Loss statistic (pythWL) and those of the linear combination of Weibulls from 1979 to 2013. Originally, we used ESPN's ExWL statistic\footnote{See the bottom of the page \url{http://espn.go.com/mlb/stats/rpi/_/year/2011}.} which used an exponent of 2; however, ESPN only went down to the year 2002, and it has been shown that using the exponent 1.83 is more accurate than using ESPN's exponent of 2. Using \url{baseball-reference.com}'s Pythagorean Win-Loss statistic (pythWL) to obtain more data (\url{baseball-reference.com} allowed us to go all the way down to 1979, rather than just 2002, which ESPN gives), the pythWL statistic\footnote{At \url{http://www.sports-reference.com/blog/baseball-reference-faqs/} see the section ``What is Pythagorean Winning Percentage?".} is calculated as
\begin{center}
$({\text{runs scored}^{1.83}})/({\text{runs scored}^{1.83}+\text{runs allowed}^{1.83}})$.
\end{center}

We display the results of our comparisons in Figure \ref{espnplot}. The mean number of games off for the pythWL statistic was 3.09 with a standard deviation of 2.26, numbers only slightly worse than those of the linear combination of Weibulls (mean of 3.03 with standard deviation of 2.21). So, we can see that the linear combination of Weibulls is doing, on average, about .06 of a game better than the pythWL statistic. We performed an independent two-sample t-test with unequal variances in $R$ using the t.test command to see if the difference between the games off determined by the pythWL statistic and the linear combinations of Weibulls is statistically significant in Figure \ref{ttest2}. With a very large $p$-value, we fail to reject the null hypothesis, suggesting that the difference in mean number of games off is not in fact statistically significant. We also display a plot (Figure \ref{difference}) that models the difference in the number of games between the pythWL statistic and the linear combination of Weibulls; it seems to be a constant positive value for the most part, suggesting that the linear combination of Weibulls is doing slightly better than the pythWL statistic.

\begin{figure}[h!]
\includegraphics[scale=0.46]{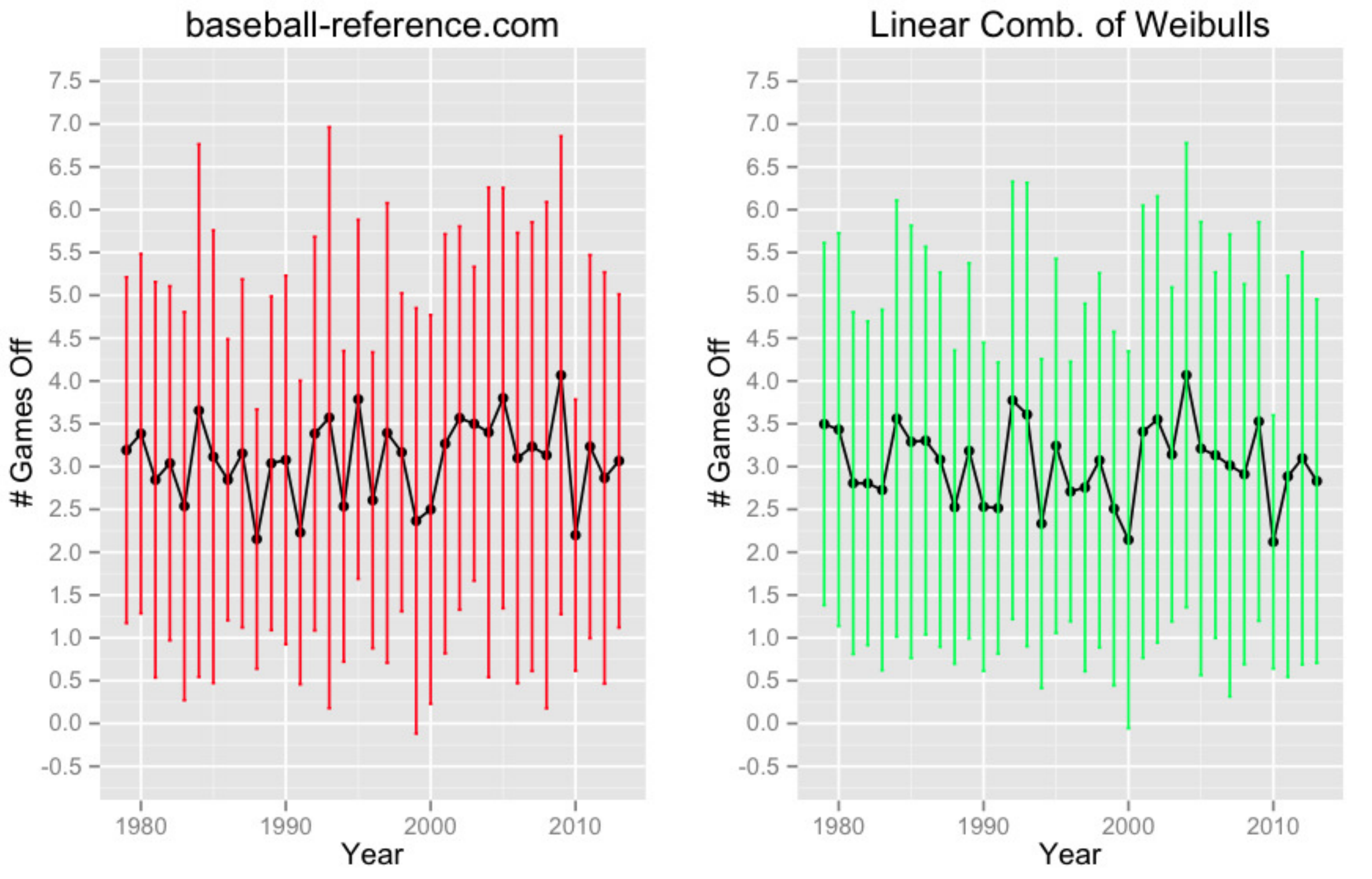}
\caption{Mean number of games off (with standard deviation) for \url{baseball-reference.com}'s pythWL statistic and linear combination of Weibulls from 1979-2013. \label{espnplot}}
\end{figure}

Looking at Figure \ref{difference} more closely, we can see that there are parts/eras of the graph in which the pythWL statistic does better, and parts where the linear combination of Weibulls does better. In the era from 1979-1989, the pythWL statistic is more accurate, beating the linear combination of Weibulls in 7 out of the 11 years. However, from 1990 to 2013, the linear combination of Weibulls wins in 15 out of the 24 years, and does so by around 0.3 games in those years. Furthermore, when the pythWL statistic does beat the linear combination of Weibulls in the years from 1990 to 2013, it does so by around 0.25 games, including the point at 2004, which seems very out of the ordinary; without this point, the pythWL statistic wins by about .2 games in the years between 1990 and 2013 that it does beat the linear combination of Weibulls. Thus, in more recent years, it may make more sense to use the linear combination of Weibulls. In addition, with respect to the standard deviation of number of games off of the pythWL statistic (2.26) and the linear combination of Weibulls (2.21), we can see that the linear combination of Weibulls provides on average a tighter fit, i.e., there is less fluctuation in the mean number of games off for each team in each year (from 1990 to 2013, the pythWL statistic standard deviation in games off is 2.34 while that of the linear combination of Weibulls is 2.22, so we again see that the linear combination does noticeably better in recent years).

It is important to note that the pythWL statistic just takes the functional form of the Pythagorean Win/Loss Formula with an exponent ($\gamma$) of 1.83, while we give theoretical justification for our formula.


\begin{figure}[h!]
\includegraphics[scale=0.6]{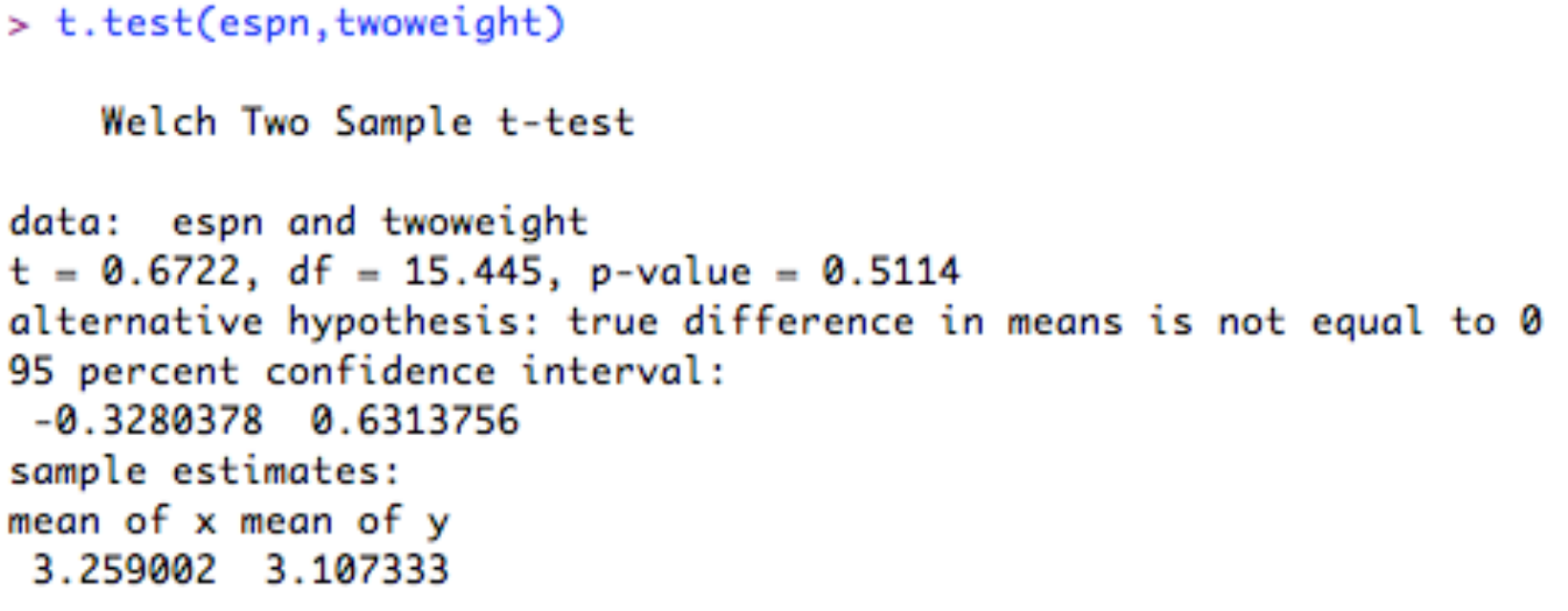}
\caption{t-test to determine whether the difference between the games off determined by ESPN ExWL and those by linear combinations of Weibulls is statistically significant.  \label{ttest2}}
\end{figure}

\begin{figure}[h!]
\includegraphics[scale=0.4]{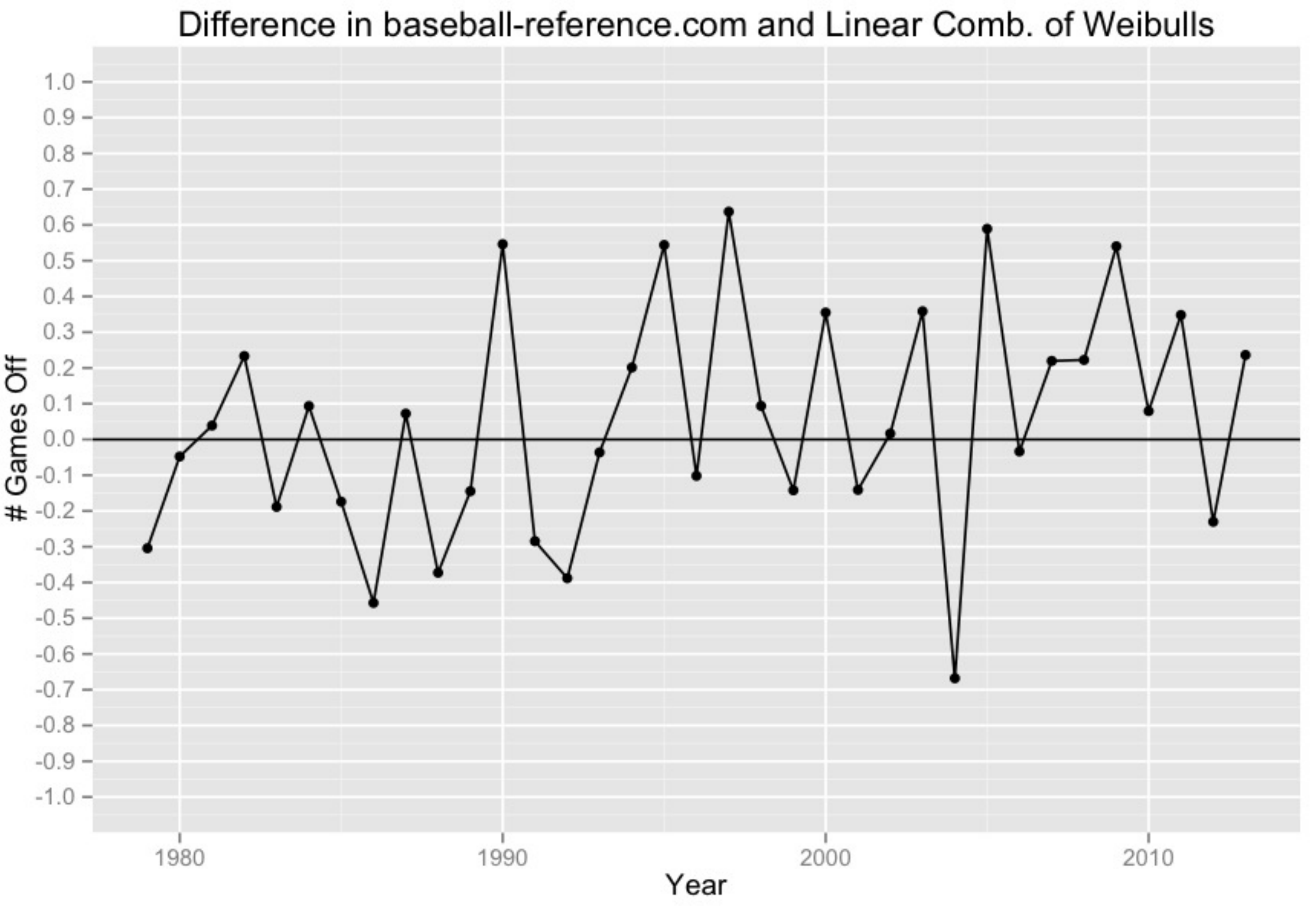}
\caption{Difference in mean number of games off for \url{baseball-reference.com}'s pythWL statistic and linear combination of Weibulls from 1979-2013. \label{difference}}
\end{figure}

We also performed $\chi^2$ tests to determine the goodness of fit to see how well the linear combination of Weibulls maps the observed data, and whether runs scored and allowed are independent. We used the bins as in \eqref{bins} and test statistic
\begin{align}
&\sum_{k=1}^{\textnormal{\# Bins}} \frac{(\rso(k)-\# \textnormal{Games} \ast  A(\alpha_{{\rm RS}_1},\alpha_{{\rm RS}_2}, -.5,\gamma,c_1,k))^2}{\# \textnormal{Games} \ast  A(\alpha_{{\rm RS}_1},\alpha_{{\rm RS}_2},-.5,\gamma,c_1,k)} \nonumber \\
&\indent + \sum_{k=1}^{\textnormal{\# Bins}} \frac{(\rao(k)-\# \textnormal{Games} \ast  A(\alpha_{{\rm RA}_1},\alpha_{{\rm RA}_2}, -.5,\gamma,c_1',k))^2}{\# \textnormal{Games} \ast  A(\alpha_{{\rm RA}_1},\alpha_{RS_A},-.5,\gamma,c_1',k)}
\end{align}
for the goodness of fit tests, with $2\ast (\# \textnormal{Bins} -1)-1-7 = 16$ degrees of freedom, the factor of 7 coming from estimating 7 parameters, namely $\alpha_{{\rm RS}_1}$, $\alpha_{{\rm RS}_2}$, $\alpha_{{\rm RA}_1}$, $\alpha_{{\rm RA}_2}$, $\gamma$, $c_1$, and $c_1'$. We did not estimate $\beta$, as we took it to be -.5. Having 16 degrees of freedom gives critical threshold values of 26.3 (at the 95\% level) and 32.0 (at the 99\% level). However, since there are multiple comparisons being done (namely 30 for the different teams), we use a Bonferroni adjustment and obtain critical thresholds of 37.7 (95\%) and 42.5 (99\%). From the first column of Figure \ref{independence}, all the teams fall within the unadjusted 99\% threshold, with the exception of the Texas Rangers (just barely!), who easily fall into the Bonferroni adjusted 95\% threshold. Therefore, the observed data closely follows a linear combination of Weibulls with the proper estimated parameters.

Since the test for independence of runs scored and allowed requires that the row and column of the contingency table have at least one non-zero entry, the bins used to bin the runs score and allowed were
\be
[0,1)\ \cup\ [1,2) \ \cup\ \cdots\ \cup\ [9,10)\ \cup\ [11,\infty).
\ee

We use integer endpoints because we are using the observed runs from games. We have a 12 by 12 contingency table with zeroes along the diagonal, since runs scored and allowed can never be equal. This leads to an incomplete 12 by 12 contingency table with $(12-1)^2-12=109$ degrees of freedom; constructing a test requires the use of structural zeroes. The theory behind tests using structural zeroes can be seen in \cite{Mil} or Appendix 9.2 of \cite{MCGLP}. We observe that 109 degrees of freedom give critical threshold values of 134.37 (at the 95\% level) and 146.26 (at the 99\% level). Again, since we are doing multiple comparisons, we use a Bonferroni adjustment, obtaining critical thresholds of 157.68 (95\%) and 166.45 (99\%). From the second column of Figure \ref {independence}, all the teams fall within the 99\% threshold, with the exception of the Los Angeles Angels (just barely!), who easily fall into the Bonferroni adjusted 95\% threshold. Thus, runs scored and allowed are acting as though they are statistically independent.

\begin{figure}[h!]
\includegraphics[scale=0.51]{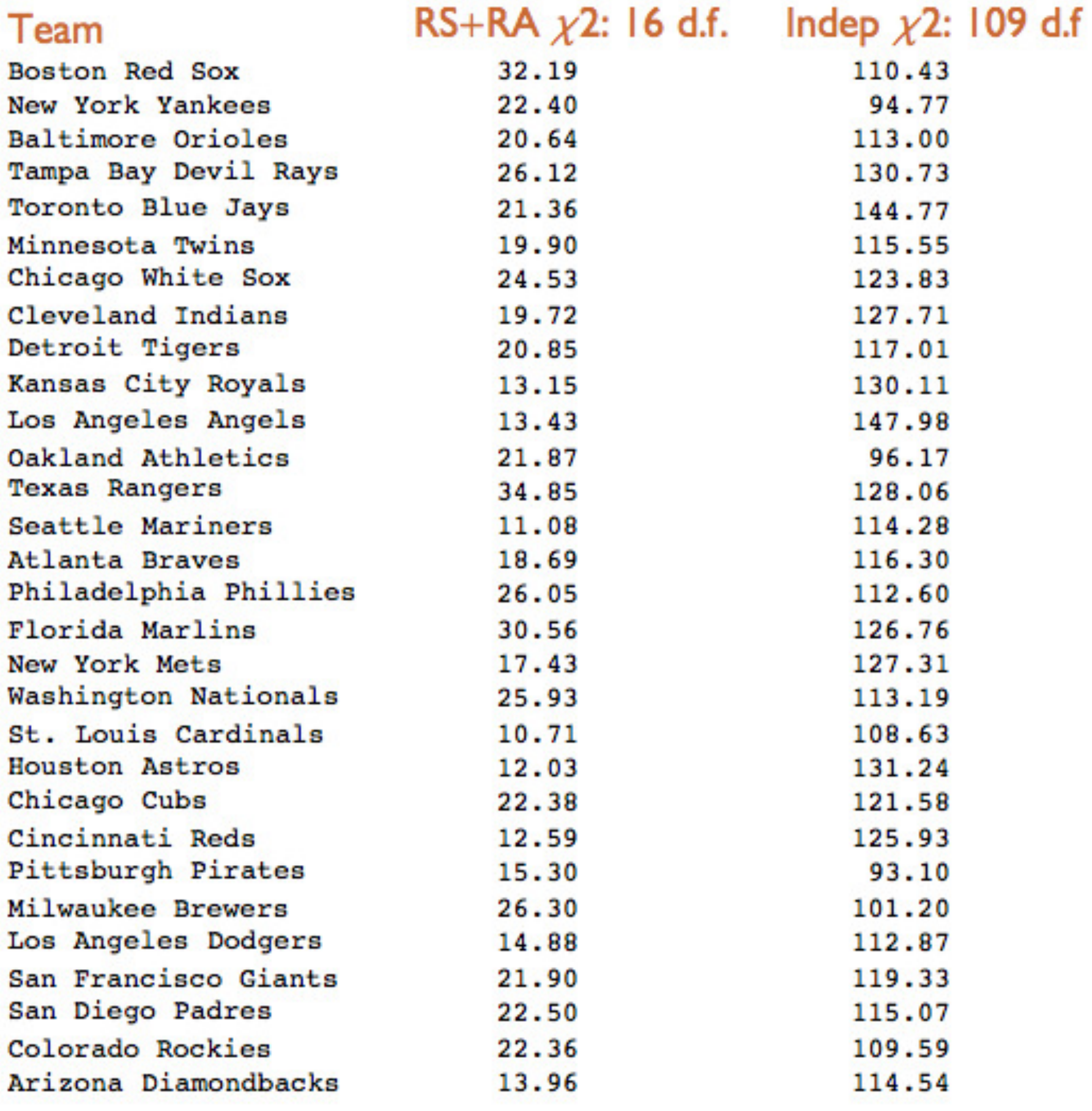}
\caption{$\chi^2$ test results of the 2011 season from least squares of goodness of fit and independence of runs score and allowed. \label{independence}}
\end{figure}

A more in depth discussion of the justification behind the tests can be found in \cite{Mil}.

\section{Future Work and Conclusions}

While a one game improvement in prediction is very promising, as our formula requires us to fit the runs scored and allowed distributions we explored simplifications. We tried to simplify the formula, even giving up some accuracy, in order to devise a formula that could be easily implemented using just a team's runs scored and allowed (and the variance of each of these) in order to determine the team's winning percentage. Unfortunately, the weight parameters $c_1$ and $c_1'$ plays too much of a factor; in 2011, the mean of the parameter $c_1$ is 0.21 with a standard deviation of 0.39 (and a median of 0.21). With such large fluctuations in the weight parameters from team to team, the task of finding a simpler formula was almost impossible, as creating a uniform formula that every team could use was not feasible when two of the key parameters were so volatile. Taking this into account, we tried fixing the $\gamma$, $c_1$, and $c_1'$ parameters, allowing for us to just solve a quartic involving the first and second moments to find the other parameters ($\alpha_{{\rm RS}_1}$, $\alpha_{{\rm RS}_2}$, $\alpha_{{\rm RA}_1}$, and $\alpha_{{\rm RA}_2}$). However, while we were able to solve for the other parameters, plugging these values of the parameters gave us a significantly worse prediction of teams' win-loss percentage compared to linear combination of Weibulls and \url{baseball-reference.com}'s Pythagorean Win-Loss statistic. One of the great attractions of James' Pythagorean formula is its ease of use; we hope to return to other simplifications and approximations in a later paper. Our hope is to find a linearization or approximation of our main result, similar to how Dayaratna and Miller \cite{DaMil1} showed the linear predictor of Jones and Tappin \cite{JT} follows from a linearization of the Pythagorean formula.

To summarize our results, using a linear combination of Weibulls rather than a single Weibull increases the prediction accuracy of a team's W/L percentage. More specifically, we saw that the single Weibull's predictions for a team's wins were on average 4.22 games off (with a standard deviation of 3.03), while the linear combination of Weibull's predictions for a team's wins were from 2004-2012 were on average 3.11 games off (with a standard deviation of 2.33), producing about a $25\%$ increase in prediction accuracy. We also performed $\chi^2$ goodness of fit tests for the linear combination of Weibulls and tested the statistical independence of runs scored and allowed (a necessary assumption), and see that in fact the linear combination of Weibulls with properly estimated parameters obtained from least squares analysis closely maps the observed runs and that runs scored and allowed are in fact statistically independent. In addition, when compared against \url{baseball-reference.com}'s Pythagorean Win-Loss statistic, the linear combination of Weibulls does .06 of a game better in the years from 1979 to 2013, but this improvement cannot be considered statistically significant. However, in more recent years, it is worth noting that it does appear that the linear combination of Weibulls is doing better than \url{baseball-reference.com}'s Pythagorean Win-Loss statistic.

\appendix

\section{Moments of Weibulls}\label{sec:AppMoment}

For the runs scored data, we have $c_2 = 1-c_1$ and the density equals
\begin{equation}
c_1\frac{\gamma}{\alpha_{{\rm RS}_1}} \left(\frac{x-\beta}{\alpha_{{\rm RS}_1}}\right)^{\gamma-1} e^{-(\frac{x-\beta}{\alpha_{{\rm RS}_1}})^{\gamma}}+(1-c_1)\frac{\gamma}{\alpha_{{\rm RS}_2}} \left(\frac{x-\beta}{\alpha_{{\rm RS}_2}}\right)^{\gamma-1} e^{-(\frac{x-\beta}{\alpha_{{\rm RS}_2}})^{\gamma}}.
\end{equation}
From \cite{Mur}, and using the fact that the two Weibulls in the linear combination are independent, we obtained the following moments:
\begin{align}
\textnormal{First Moment}  &\ =\ c_1(\alpha_{{\rm RS}_1}\Gamma(1+\gamma^{-1})+\beta)+(1-c_1)(\alpha_{{\rm RS}_2}\Gamma(1+\gamma^{-1})+\beta) \nonumber \\
\textnormal{Second Moment}  & \ =\  c_1^2\left[ \alpha_{{\rm RS}_1}^2\left( \Gamma(\frac{2}{\gamma}+1)-(\Gamma(\frac{1}{\gamma}+1))^2\right)\right] \nonumber \\ & \indent + (1-c_1)^2\left[ \alpha_{{\rm RS}_2}^2\left( \Gamma(\frac{2}{\gamma}+1)-(\Gamma(\frac{1}{\gamma}+1))^2\right)\right] \nonumber \\
\textnormal{Third Moment} & \ =\  (c_1^3+(1-c_1)^3)\ast \frac{g_3-3g_1g_2+2g_1^3}{(g_2-g_1^2)^{3/2}}\nonumber \\
\textnormal{Fourth Moment} & \ =\  (c_1^4+(1-c_1)^4)\ast \frac{g_4-4g_1g_3+6g_2g_1^2-3g_1^4}{(g_2-g_1^2)^2} \nonumber\\
& \indent + 6\ast c_1^2(1-c_1)^2\ast \left[\alpha_{{\rm RS}_1}^2\alpha_{{\rm RS}_2}^2 \left(g_2-g_1^2\right)^2 \right],
\end{align} where $g_i=\Gamma(1+\frac{i}{\gamma})$, and $\Gamma(x)$ is the Gamma function defined by $\Gamma(x)=\int_0^\infty e^{-u}u^{x-1}du$. We next used $R$ to find the observed moments for teams' runs scored from 2007. The code is available in \cite{Luo}.


\ \\

\end{document}